\tikzset{
  % style to apply some styles to each segment of a path
  on each segment/.style={
    decorate,
    decoration={
      show path construction,
      moveto code={},
      lineto code={
        \path [#1]
        (\tikzinputsegmentfirst) -- (\tikzinputsegmentlast);
      },
      curveto code={
        \path [#1] (\tikzinputsegmentfirst)
        .. controls
        (\tikzinputsegmentsupporta) and (\tikzinputsegmentsupportb)
        ..
        (\tikzinputsegmentlast);
      },
      closepath code={
        \path [#1]
        (\tikzinputsegmentfirst) -- (\tikzinputsegmentlast);
      },
    },
  },
  % style to add an arrow in the middle of a path
  mid arrow/.style={postaction={decorate,decoration={
        markings,
        mark=at position .5 with {\arrow[#1]{stealth}}
      }}},
}
\theoremstyle{Lehn-up}
\newtheorem{Construction}[equation]{Construction}
\newcommand{\Diff}{\mathrm{Diff}}
\title[Stable Godeaux surfaces]{A new  irreducible component of the moduli space of stable Godeaux surfaces}
\author{S\"onke Rollenske}
\address{S\"onke Rollenske\\Fakult\"at f\"ur Mathematik\\Universt\"at Bielefeld\\Universit\"atsstr. 25\\33615 Bielefeld\\Germany}
\email{rollenske@math.uni-bielefeld.de}
\begin{document}
\begin{abstract}
We construct from a general del Pezzo surface of degree 1 a Gorenstein stable surfaces $X$ with $K_X^2=1$ and $p_g(X)=q(X)=0$. These surfaces are not smoothable but give an open subset of an irreducible component of the moduli space of stable Godeaux surfaces. 

In a particular example we also compute the canonical ring explicitly and discuss the behaviour of  pluricanonical maps.
\end{abstract}
\subjclass[2010]{14J29, 14J10, 14J25}
\keywords{surfaces of general type, stable surfaces,  Godeaux surfaces}

\maketitle

\setcounter{tocdepth}{1}
\tableofcontents

\section{Introduction}

One of the most vexing problems in the classification of surfaces of general type is the fact that we are not yet able to classify surfaces with the smallest possible invariants $K_X^2 = 1$ and $p_g(X)=q(X)=0$. Such surfaces are usually called  (numerical) Godeaux surfaces \cite{BHPV}.

Nowadays the moduli space of surfaces of general type comes with a modular compactification, the moduli space of stable surfaces \cite{kollar12, KollarModuli}. The stable surfaces occurring in the compactification can be used to gain insight in the classical moduli space but are also interesting in their on right.

This article grew out of two motivations: first of all we hoped to give a new construction of some   Godeaux surfaces by smoothing stable Godeaux surfaces in the spirit of \cite{lee-park07} but starting with a Gorenstein surface. Secondly,   in our study of pluricanonical maps of stable surfaces \cite{liu-rollenske14} we were looking for examples of Gorenstein stable surfaces with  $|5K_X|$ not very ample; so it was natural to study some examples with small invariants. 

Both motivations lead us to the construction described in Section \ref{sect: surface} where from any general smooth del Pezzo surface of degree 1 we construct a (Gorenstein) stable  Godeaux surface, that is a Gorenstein stable surface $X$ with $K_X^2=1$ and $p_g(X)=q(X)=0$. We also show that $X$ is topologically simply connected and compute its integral homology.

It turns out that our first hope was futile: the surfaces we construct are far away from any smooth surfaces.
\begin{custom}[Theorem A]
 Let $X$ be a stable  Godeaux surface of the type constructed in Section \ref{sect: surface}. Then the Kuranishi space of $X$ is smooth of dimension 8 and all deformations are locally trivial. Such surfaces form an irreducible open subset of the moduli space $\overline{\gothM}_{1,1}$ of stable surface with $K_X^2=\chi(\ko_X)=1$, which has at most finite quotient singularities. 
\end{custom}
The proof of Theorem A will be given in Section \ref{sect: proof}. The key point is the explicit control over  infinitesimal deformations in Section \ref{sect: deformations}, where we have to overcome the annoying fact that we are handling local normal crossing surfaces which do not have global normal crossings.

In Section \ref{sect: 3} we will concentrate on one explicit example and compute, with the help of a computer algebra system, its  canonical ring. The result is a rather complicated ring, Gorenstein in codimension ten, but it is generated in degree up to 5 and, in particular, $|5K_X|$ is very ample. In fact, one can show directly that the latter property is true for all surfaces in our family \cite{franciosi-rollenske14}.

In principle, it would be possible to study degenerations of our examples via the del Pezzo surfaces, however it is unclear if in this way one would be able to connect to a known component of the moduli space of Godeaux surfaces. 

Starting from the results in \cite{fpr14} one can actually classify Gorenstein stable Godeaux surfaces with worse than canonical singularities. These will be studied in a forthcoming joint paper with Marco Franciosi and Rita Pardini. 

\subsubsection*{Acknowledgements}
 We are indebted to Marco Franciosi, Rita Pardini, and Wenfei Liu for many discussions on stable surfaces. We also enjoyed discussing different aspects of this project with Olivier Benoist, Anne Fr\"ubis-Kr\"uger,  Roberto Pignatelli, Miles Reid, Helge Ruddat, and  Timo Sch\"urg.

The work of the author is supported by the DFG through the Emmy Noether program and SFB 701. Part of this research was carried out while visiting  the University of Pisa, supported by GNSAGA of INDAM, and during a stay at HIM in Bonn.

\section{The surface}\label{sect: surface}
\begin{Construction}\label{Construction}
 Let $\bar X$ be a smooth del Pezzo surface of degree 1, that is, a smooth projective surface with   $-K_{\bar X}$ ample and $K_{\bar X}^2 = 1$. Assume that there are two different curves $\bar D_1, \bar D_2 \in |-K_{\bar X}|$ such that $\bar D_i$ is isomorphic to a plane nodal cubic. We will see below that this condition is satisfied for a general choice of $\bar X$. Let $\bar D=\bar D_1\cup\bar D_2$ and let $\bar \nu\colon \bar D^\nu \to \bar D$ be the normalisation map.
 
 We specify an involution $\tau$ on $\bar D^\nu$, which preserves the preimages of the nodes, and construct a surface $X$ as the pushout in the diagram
 \begin{equation}\label{diagram: pushout}
\begin{tikzcd}
    \bar X \dar{\pi}\rar[hookleftarrow]{\bar\iota} & \bar D\dar{\pi} & \bar D^\nu \lar[swap]{\bar\nu}\dar{/\tau}%[swap]{\pi}
    \\
X\rar[hookleftarrow]{\iota}  &D &D^\nu\lar[swap]{\nu}
    \end{tikzcd}.
 \end{equation}
Since $\bar D^\nu$ is the disjoint union of two copies of $\IP^1$, each with three marked points that are preimages of nodes of $\bar D$, the involution $\tau$  is uniquely determined by its action on these  points. This and some further notation is given in Figure \ref{fig: construction}.
\end{Construction}
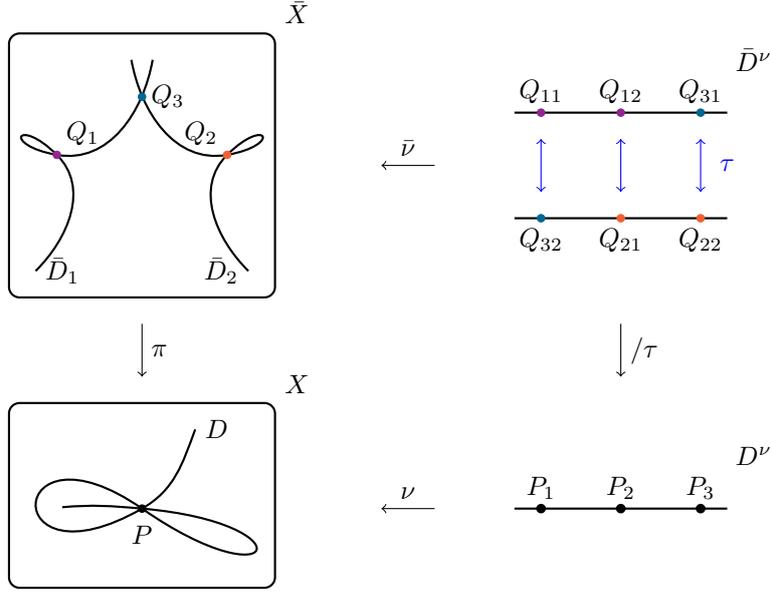
\begin{figure}
\small
\begin{tikzpicture}
[curve/.style ={thick, every loop/.style={looseness=10, min distance=30}},
q1/.style={color=Plum},
q2/.style={color=RedOrange},
q3/.style={color=MidnightBlue},
scale = 0.7
]

\node (Q3) at (0,2.3){};
\node (Q2) at (1.6,1.2) {};
\node (Q1) at (-1.6,1.2) {};

\begin{scope}
\draw[thick, rounded corners] (-2.5, -1.5) rectangle (2.5, 3.5) node [above right] {$\bar X$};
\draw [curve] (-2,-1) node [ right] {$\bar D_1$} to[out=45, in=315] (-1.6, 1.2) to[out=135, in =170,loop] () to[out=350, in = 260] (.2,3);
\draw [curve, xscale=-1] (-2,-1)  node [left] {$\bar D_2$} to[out=45, in=315] (-1.6, 1.2) to[out=135, in =170,loop] () to[out=350, in = 260] (.2,3);

\filldraw[q3] (Q3) circle (2pt) ; \node at (Q3) [right] {$Q_3$};
\filldraw[q2] (Q2) circle (2pt);  \node at (Q2) [above left] {$Q_2$};
\filldraw[q1] (Q1) circle (2pt); \node at (Q1) [above right] {$Q_1$};

\draw[->] (0,-2) to node[right] {$\pi$}++(0,-1);
\end{scope}

\begin{scope}[yshift=-5.5cm]
\draw[thick, rounded corners] (-2.5, -1.5) rectangle (2.5, 2) node [above right] {$X$};
\draw[curve,  every loop/.style={looseness=30, min distance=90}] (1,1.5) node [right]{$D$} to[out = 250, in = 30]
(0,0) to[out = 210, in = 145, loop]
() to[out = 325, in =355 ,  loop]
() to[out =175, in = 5] (179:1.5cm);

\filldraw (0,0) circle (2pt) node [below, yshift = -.1cm] {$P$};
\end{scope}

\begin{scope}[xshift=8cm, yshift=1cm]
\draw [curve] (3,1) -- (2.5,1) node[above] {$Q_{31}$} -- (1,1)  node[above] {$Q_{12}$} to (-0.5,1)  node[above] {$Q_{11}$}--(-1,1);
\filldraw [q3] (2.5,1) circle (2pt);
\filldraw[q1] (1,1) circle (2pt);
\filldraw[q1] (-0.5,1) circle (2pt);

\filldraw [curve] (3,-1)--(2.5,-1)  node[below] {$Q_{22}$}--(1,-1)  node[below] {$Q_{21}$}--(-0.5,-1)  node[below] {$Q_{32}$}--(-1,-1);
\filldraw [q2] (2.5,-1) circle (2pt);
\filldraw[q2] (1,-1) circle (2pt);
\filldraw[q3] (-0.5,-1) circle (2pt);

\node at (3.5, 2) {$\bar D^\nu$};

\begin{scope}[color= blue]
\draw[<->] (2.5, 0.5) -- (2.5, -0.5);
\draw[<->] (1, 0.5) -- (1, -0.5);
\draw[<->] (-.5, 0.5) -- (-.5, -0.5);
\node at (3,0) {$\tau$};
\end{scope}

\draw[->] (-2.5,0) to node[above] {$\bar\nu$}++(-1,0);
\draw[->] (1,-3) to node[right] {$/\tau$}++(0,-1);
\end{scope}

\begin{scope}[xshift=8cm, yshift = -5.5cm]
\filldraw [curve] (3,0) --(2.5,0) circle (2pt) node[above] {$P_{3}$}--(1,0) circle (2pt) node[above] {$P_2$}--(-0.5,0) circle (2pt) node[above] {$P_1$}--(-1,0);

\node at (3.5, 1) {$ D^\nu$};
\draw[->] (-2.5,0) to node[above] {$\nu$}++(-1,0);
\end{scope}

\end{tikzpicture}
\caption{The construction of the surface}\label{fig: construction}
\end{figure}
 Note that  $\pi^*K_X = K_{\bar X}+\bar D=-K_{\bar X}$, which is ample. By \cite[Thm.\ 5.13]{KollarSMMP} the surface $X$ exists as a projective scheme,  has semi-log-canonical singularities and $K_X$ is ample. In fact, in our case it has a degenerate cusp singularity at $P$, locally isomorphic to the origin of $\{xyz=0\}\subset \IC^3$, normal crossings along $D\setminus \{P\}$ and is smooth  outside $D$.

\begin{prop}\label{prop: invariants}
 The surface $X$ has semi-log-canonical, Gorenstein singularities and $K_X$ is ample. It has invariants
 \[ K_X^2 = 1, \chi(\ko_X) = 1, p_q(X) = q(X)=0  \]
and thus is a stable  Godeaux surface. It is topologically simply connected and has  integral homology
\begin{gather*}
H_{1}(X, \IZ) =0,\qquad H_3(X, \IZ)\isom \IZ,\\
 H_{2i}(X, \IZ) =H_{2i}(\bar X, \IZ)= \begin{cases}\IZ & i=0,2\\ \IZ^9 & i=1                
                 \end{cases}.
\end{gather*}

\end{prop}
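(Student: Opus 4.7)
The first two statements were essentially recorded in the discussion preceding the proposition: semi-log-canonicity and ampleness follow from \cite[Thm.\ 5.13]{KollarSMMP} applied to $\pi^*K_X = K_{\bar X}+\bar D = -K_{\bar X}$, the surface $X$ is Gorenstein because both its singularity types ($\{xyz=0\}\subset \mathbb{A}^3$ and $\{xy=0\}\subset \mathbb{A}^3$) are hypersurfaces, and $K_X^2 = (\pi^*K_X)^2 = (-K_{\bar X})^2 = 1$.

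For $\chi(\ko_X)$ I would use the short exact sequence
\begin{equation*}
0\to\ko_X\to\pi_*\ko_{\bar X}\oplus\iota_*\ko_D\to(\iota\circ\pi|_{\bar D})_*\ko_{\bar D}\to 0
\end{equation*}
encoding the pushout, giving $\chi(\ko_X)=\chi(\ko_{\bar X})+\chi(\ko_D)-\chi(\ko_{\bar D}) = 1+(-1)-(-1) = 1$, where $\chi(\ko_{\bar D})=-1$ because $\bar D$ is a connected union of two genus-one nodal cubics meeting transversally at one point, and $\chi(\ko_D)=-1$ because $D$ is $\IP^1$ with three points identified.

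For the vanishings $p_g=q=0$ I would pass through simple connectedness. Both $\bar D$ and $D$ deformation-retract onto wedges of $2$-spheres and circles (the circles coming from node-pairings), giving $\pi_1(\bar D) = \pi_1(D) = F_2$. Reading off Figure~\ref{fig: construction}, the map $\pi_*\colon \pi_1(\bar D)\to \pi_1(D)$ sends the two vanishing cycles of the $\bar D_j$ to two of the three ``pair-identification loops'' in $D$, which form a basis of $\pi_1(D)$; so $\pi_*$ is surjective, hence an isomorphism since $F_2$ is Hopfian. Combined with $\pi_1(\bar X)=1$, van Kampen's theorem applied to the pushout yields $\pi_1(X)=1*_{F_2}F_2 = 1$, so $X$ is topologically simply connected. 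Since semi-log-canonical singularities are Du Bois, the surjection $H^1(X,\IC)\twoheadrightarrow H^1(X,\ko_X)$ forces $q(X)=0$, and then $p_g(X)=\chi(\ko_X)-1+q(X)=0$. In particular $X$ is a stable Godeaux surface.

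The integral homology is read off the Mayer--Vietoris sequence of the pushout, with inputs $H_*(\bar X)=(\IZ,0,\IZ^9,0,\IZ)$, $H_*(\bar D)=(\IZ,\IZ^2,\IZ^2)$ and $H_*(D)=(\IZ,\IZ^2,\IZ)$. The degree-$2$ map sends both generators $[\bar D_j]$ of $H_2(\bar D)\cong \IZ^2$ to $(-K_{\bar X},-[D])$, which is primitive in $\IZ^9\oplus\IZ$ because $-K_{\bar X}=3H-E_1-\cdots-E_8$ is primitive in $H_2(\bar X,\IZ)$; the kernel $\IZ$ contributes $H_3(X,\IZ)=\IZ$, and the quotient gives $H_2(X,\IZ)=\IZ^9$. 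In degree one, $\pi_*\colon H_1(\bar D)\to H_1(D)$ is the same isomorphism as on $\pi_1^{\mathrm{ab}}$, collapsing the remainder of the sequence to give $H_1(X,\IZ)=0$; the top and bottom are immediate. The main bookkeeping obstacle is the precise geometric identification of the Mayer--Vietoris maps; the primitivity of $-K_{\bar X}$ is the crucial input that prevents torsion in $H_2(X,\IZ)$.
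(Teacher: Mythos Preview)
Your overall strategy is sound and for $\chi(\ko_X)$, the Mayer--Vietoris homology computation, and the primitivity argument for $H_2$ it matches the paper's proof closely. However, there is a genuine gap in your computation of $\pi_1(X)$, on which your argument for $q=p_g=0$ also rests.

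The claim that $\pi_*\colon\pi_1(\bar D)\to\pi_1(D)$ is surjective (and then an isomorphism by Hopfianity) is false. The issue is basepoints: the two vanishing cycles sit at the distinct nodes $Q_1$ and $Q_2$, and once you transport them to a common basepoint along the unique arc through $Q_3$, their images no longer freely generate $\pi_1(D,P)$. Concretely, in the paper's homotopy model a basis $a,b$ of $\pi_1(\bar D,Q_2)$ is sent to $\alpha\beta\alpha\beta^{-1}\alpha^{-1}$ and $\beta$ in $\pi_1(D,P)=\langle\alpha,\beta\rangle$; the homomorphism to $S_3$ given by $\alpha\mapsto(12)$, $\beta\mapsto(13)$ sends both images into $\langle(13)\rangle$, so the image of $\pi_*$ has index at least $3$. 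What \emph{is} true---and is all van Kampen needs---is that the \emph{normal closure} of the image is everything: modulo $\beta$ the first relation becomes $\alpha$, whence $\pi_1(X)=\langle\alpha,\beta\rangle/\langle\!\langle\beta,\ \alpha\beta\alpha\beta^{-1}\alpha^{-1}\rangle\!\rangle=1$. This is precisely the computation the paper performs. Note that on $H_1$ the conjugating paths disappear and $\pi_*^{\mathrm{ab}}$ \emph{is} an isomorphism, so your Mayer--Vietoris argument in degree~$1$ survives unchanged.

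On $p_g=0$: your route via simple connectedness and the Du Bois property is a legitimate alternative to the paper's proof, which instead rules out an effective divisor in $|K_X|$ directly by pulling back to $|-K_{\bar X}|$ and tracking the base point $Q_3$ through the identifications. Your argument is slicker but makes the vanishing of $p_g$ contingent on the topological computation; the paper's is self-contained and independent of $\pi_1$.
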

\begin{proof}
 The statement on singularities and ampleness was proved above and we have  
 \[K_X^2 = (K_{\bar X}+\bar D)^2=(-K_{\bar X})^2=1.\]
 As in  \cite[Prop.~14]{liu-rollenske14} we also have 
 \[\chi(\ko_{X})=\chi(\ko_{\bar X})+\chi(\ko_{D})-\chi(\ko_{\bar D})=1-1+1=1\]
 and thus it remains to show that $p_g(X) = h^2(\ko_X)=h^0(\omega_X)=0$. So assume we have a non-zero effective divisor $C\in |K_X|$. Then $\pi^*D\in |K_{\bar X}+\bar D|=|-K_{\bar X}|$ which is an elliptic pencil with base point $Q_3$. Thus $\pi^* D$ contains $Q_3$ which means that $D$ itself contains the degenerate cusp $P$. Consequently $\pi^*D$ contains $Q_1$ and $Q_2$ as well, which is impossible for a divisor in $|-K_{\bar X}|$. Thus $p_g(X) = 0$ and consequently also $q(X) = 0$.  

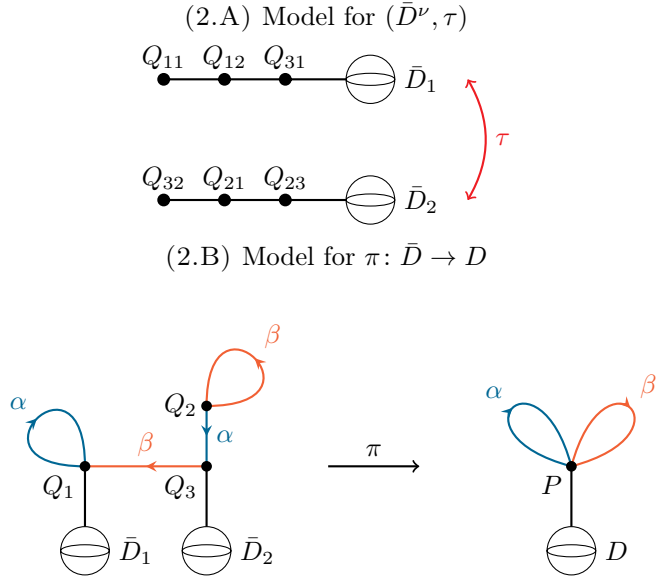
\begin{figure}[t]
\small
\begin{subfigure}{\textwidth}
\centering
\caption{Model for $(\bar D^\nu, \tau)$}\label{fig: bar Dnu }
 \begin{tikzpicture}
[sphere/.style = {thin},
graph/.style = {thick, fill=black, radius = 2.5pt}, 
scale = .8]
\begin{scope}
%L1
\draw[graph] (-2,0) circle node[above] {$Q_{11}$}--++ (1,0) circle node[above] {$Q_{12}$}--++ (1,0) circle  node[above] {$Q_{31}$}--++(1,0); 
\draw[sphere] (1.4,0) circle [radius = .4] circle [x radius=.4cm, y radius=.1cm]++(.4,0) node[right] {$\bar D_1$};

%L2
\draw[graph] (-2,-2) circle node[above] {$Q_{32}$}--++ (1,0) circle node[above] {$Q_{21}$}--++ (1,0) circle  node[above] {$Q_{23}$}--++(1,0); 
\draw[sphere] (1.4,-2) circle [radius = .4] circle [x radius=.4cm, y radius=.1cm] ++(.4,0) node[right] {$\bar D_2$};;
 
\draw[thick, <->, bend left, Red] (3,0) to node[right]{$\tau$} (3,-2);
 \end{scope}
\end{tikzpicture}
        \end{subfigure}

        \begin{subfigure}{\textwidth}
\caption{Model for $\pi\colon \bar D\to D$ }\label{fig: pi }
\small\centering

\begin{tikzpicture}[sphere/.style = {thin},
graph/.style = {thick},
alpha/.style = {MidnightBlue, thick, postaction={mid arrow}},
beta/.style = {RedOrange,thick, postaction={mid arrow}},
 every loop/.style={looseness=30, min distance=60},
scale = .8,
]

\begin{scope}
\path
(0,0)  coordinate(Q1) ++(2,0) coordinate (Q3) ++(0,1) coordinate(Q2) ;

\draw[sphere] (Q1) ++(0,-1.4) circle [radius = .4] circle [x radius=.4cm, y radius=.1cm]++(.4,0) node[right] {$\bar D_1$};
\draw[thick] (Q1) --++(0,-1);

\draw[sphere] (Q3) ++(0,-1.4) circle [radius = .4] circle [x radius=.4cm, y radius=.1cm]++(.4,0) node[right] {$\bar D_2$};
\draw[thick] (Q3) --++(0,-1);

\draw[alpha] (Q1) to[out=-180, in =90,loop]node[above left] {$\alpha$} ();
\draw[beta] (Q3) to node[above]{$\beta$} (Q1);

\draw[beta] (Q2) to[out=0, in =90,loop]node[above right] {$\beta$} ();
\draw[alpha] (Q2) to node[right]{$\alpha$} (Q3);

% D
\path (8,0) coordinate(P);
\draw[thick] (P) -- ++(0,-1);
\draw[sphere] (P)++(0,-1.4) circle [radius = .4] circle [x radius=.4cm, y radius=.1cm]++(0.4,0) node[right]{$D$};
\draw[beta] (P) to[in=15, out =75,loop] node[above right]{$\beta$} ();
\draw[alpha] (P) to[out=165, in =105,loop] node[above left] {$\alpha$} ();
% \node at (P)++(2.5,3) {$D$};

 \path[fill=black, radius = 2.5pt]
 (Q1) circle   node[below left] {$Q_1$}
(Q2) circle  node[  left] {$Q_2$}
(Q3) circle node[below left] {$Q_3$}
(P) circle node[below left]{$P$} ;
\draw[thick, ->] (4,0) to node[above]{$\pi$} ++(1.5,0);
\end{scope}
\end{tikzpicture}
        \end{subfigure}
        \caption{Homotopy-equivalent models for the construction}\label{fig: topological model}
\end{figure}
For the computation of topological invariants we first choose a suitable homotopy-equivalent models $\pi\colon \bar D\to D$: we replace each component of $\bar D$ by a 2-Sphere with an attached interval on which the glueing takes place; this is shown in Figure \ref{fig: topological model}. In this model, the maps in the Mayer-Vietoris-Sequence in homology (see \cite[Cor.~3.2]{fpr14b})
\[ \begin{tikzcd}
{}\rar& H_i(\bar D, \IZ) \rar{(\pi_*, \bar \iota_*)} & H_i(D, \IZ)\oplus H_i(\bar X, \IZ)\rar{\iota_*-\pi_*} & H_i(X, \IZ)\rar&
\end{tikzcd}\]
are easily computed and give the claimed result. For the fundamental group we have, again by \cite[Cor.~3.2]{fpr14b}, 
\[ \pi_1(X, P) = \frac{\pi_1(D, P)}{\langle\pi_1(\bar D, Q_2)\rangle}=\frac{\langle\alpha, \beta\rangle}{\langle\beta\alpha\inverse\beta, \inverse\alpha\beta\alpha \rangle}={1},\]
because $\bar X$ is simply connected. 
\end{proof}
\begin{rem}
The vanishing of the irregularity can be proved without resorting to the explicit construction given above (see \cite{fpr14b}). 
\end{rem}

\section{The family}\label{sect: family}
Recall  that a smooth del Pezzo surface $\bar X$  of degree 1 is the blow up of $\IP^2$ in  eight points $p_1, \dots, p_8$  in general position \cite{KollarRationalCurves}. On $\IP^2$ the anti-canonical pencil of $X$ induces a pencil of cubics and, conversely, given a (general)  pencil of cubics we can blow up eight of the base points to get a del Pezzo surface of degree 1. For later use we record
\begin{lem}\label{lem: def dP}
 Let $\sigma\colon \bar  X=\mathrm{Bl}_{\{p_1, \dots, p_8\}}(\IP^2)\to \IP^2$ be a smooth del Pezzo surface of degree $1$. Then $h^0(\kt_{\bar X})=h^2(\kt_{\bar X})=0$ and $h^1(\kt_{\bar X})=8$. The local universal deformation space of $X$ is smooth of dimension $8$ and every deformation is induced by a deformation of the points $p_i$ in the plane.
\end{lem}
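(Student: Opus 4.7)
The plan is to compute $H^{*}(\kt_{\bar X})$ via the tangent-sheaf sequence for the blowup $\sigma$ and then identify the Kuranishi space with the local moduli of $8$ points in $\IP^{2}$ modulo $PGL_{3}$, which is manifestly smooth of dimension $16-8=8$.

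Writing $E_{1},\dots,E_{8}$ for the exceptional curves, the Euler sequences on $E_{i}=\IP(T_{\IP^{2},p_{i}})$ assemble to the standard exact sequence
\[
0\longrightarrow \kt_{\bar X}\longrightarrow \sigma^{*}\kt_{\IP^{2}}\longrightarrow \bigoplus_{i=1}^{8}\ko_{E_{i}}(1)\longrightarrow 0.
\]
Using $R^{j}\sigma_{*}\ko_{\bar X}=0$ for $j>0$ to replace $H^{*}(\sigma^{*}\kt_{\IP^{2}})$ by $H^{*}(\kt_{\IP^{2}})$ (of dimensions $8,0,0$), and the fact that $\bigoplus\ko_{E_{i}}(1)$ has $h^{0}=16$ and $h^{1}=0$, the long exact sequence reduces to $H^{2}(\kt_{\bar X})=0$ together with
\[
0\to H^{0}(\kt_{\bar X})\to H^{0}(\kt_{\IP^{2}})\longrightarrow \bigoplus_{i=1}^{8}T_{\IP^{2},p_{i}}\to H^{1}(\kt_{\bar X})\to 0,
\]
where a local check identifies the middle arrow with simultaneous evaluation of global vector fields at $p_{1},\dots,p_{8}$. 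Since the zero scheme of a nonzero section of $\kt_{\IP^{2}}$ has class $c_{2}(\kt_{\IP^{2}})=3$, an incidence-dimension count shows that already the evaluation at four general points is injective, giving $h^{0}(\kt_{\bar X})=0$ and $h^{1}(\kt_{\bar X})=16-8=8$.

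Given $H^{2}(\kt_{\bar X})=0$, standard deformation theory yields smoothness of the Kuranishi space of dimension $h^{1}=8$. To see that every deformation is induced by moving the points, I would argue as follows: for any deformation $\mathcal{X}\to B$ of $\bar X$, each $E_{i}$ has $h^{0}(N_{E_{i}/\bar X})=h^{1}(N_{E_{i}/\bar X})=0$, hence deforms uniquely to a family of disjoint $(-1)$-curves in $\mathcal{X}$ which can be contracted over $B$; the resulting deformation of $\IP^{2}$ is trivial because $h^{1}(\kt_{\IP^{2}})=0$, producing the desired deformation of the $8$-tuple of points. The Kodaira–Spencer map from the local moduli of $(\IP^{2};p_{1},\dots,p_{8})/PGL_{3}$ to the Kuranishi space is then identified with the isomorphism in the display above, so, both germs being smooth of dimension $8$, this map is itself an isomorphism. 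The only delicate point I anticipate is verifying the exact shape of the blowup sequence and checking that the induced map on $H^{0}$ really is the evaluation at the $p_{i}$; everything else is formal.
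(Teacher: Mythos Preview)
Your argument is correct and runs parallel to the paper's. The paper pushes the computation down to $\IP^{2}$ via Catanese's identification $\sigma_{*}\kt_{\bar X}\cong\ki\otimes\kt_{\IP^{2}}$ and then uses the sequence $0\to\ki\otimes\kt_{\IP^{2}}\to\kt_{\IP^{2}}\to\kt_{\IP^{2}}|_{\{p_{i}\}}\to 0$; your blowup sequence on $\bar X$ is precisely the pullback of this, and the resulting four-term sequence in cohomology is identical (your $\bigoplus T_{\IP^{2},p_{i}}$ is the paper's $H^{0}(\kt_{\IP^{2}}|_{\{p_{i}\}})$). The only genuine difference is in the final claim: the paper reads off directly that the surjection $\bigoplus T_{\IP^{2},p_{i}}\twoheadrightarrow H^{1}(\kt_{\bar X})$ \emph{is} the Kodaira--Spencer map for moving the points, whereas you argue geometrically by deforming and contracting the $(-1)$-curves in the family. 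Your route is slightly longer but more self-contained (it does not rely on identifying the connecting homomorphism); the paper's is quicker once one accepts that identification. A small quibble: your $c_{2}=3$ remark is not quite an argument, since special sections of $\kt_{\IP^{2}}$ can vanish along a line --- but the honest statement that no nonzero element of $\mathfrak{pgl}_{3}$ fixes four points in general position is immediate, and this is all either proof needs.
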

\begin{proof} 
Let $\ki$ be the ideal sheaf of the points in the plane. Then by \cite[Lem.\ 9.22]{catanese88} $\sigma_*\kt_{\bar X} = \ki\tensor \kt_{\IP^2}$ and there are no higher pushforward sheaves. Since the points are in general position $H^0(\kt_{\IP^2}\tensor \ki)=0$ and the dimensions of the cohomology groups of $\kt_{\bar X}$ follow from the exact sequence 
\[0\to \kt_{\IP^2}\tensor \ki\to \kt_{\IP^2} \to \kt_{\IP^2}\restr{\{p_i\}}\to 0.\]
Since $H^2(\kt_{\bar X})=H^0(\kt_{\bar X}=0$ the surface has a smooth universal deformation of dimension $h^1=\kt_{\bar X})=8$ by \cite[Cor.~2.6.4]{SernesiDeformation}. 
The surjection $H^0(\kt_{\IP^2}\restr{\{p_i\}})\to H^1(\kt_{\bar X})$ is the map that associates to an infinitesimal deformation of the points in $\IP^2$ the corresponding deformation of $X$, which shows the last point.
\end{proof}

Blowing up all nine base points of a pencil of cubics we obtain an elliptic fibration $f\colon \tilde X \to \IP^1$ with Euler number  $e(\tilde X) = 12\neq0$, so every pencil contains singular cubics. Since cubics with worse than nodal singularities form a subset of codimension at least two in the space of cubics, the general pencil will  contain  12 nodal cubics as singular elements. In particular, the general del Pezzo surface of degree 1 satisfies the assumptions made in  Construction \ref{Construction}.

We want to carry out the construction in from the previous section in a family: fix once for all coordinates $x,y,z$  in $\IP^2$ and let $\bar D_1= \{y^2z-x^2(x-z)=0\}$. Then let 
\[B' = \left\{(\bar D_2, p_0) \left| \begin{minipage}{5.8cm}
$\bar D_2$  nodal plane cubic, intersecting $\bar D_1$ transversely in smooth points,\\
$p_0\in \bar D_1\cap \bar D_2$
                                     \end{minipage}
\right.\right\}\subset \IP(\mathrm{Sym}^3\langle x,y,z\rangle)\times \bar D_1.\]
Looking at the projection to $\bar D_1$ we see that $B'$ is irreducible; the projection of $B'$ to the locus of nodal curves in $\IP(\mathrm{Sym}^3\langle x,y,z\rangle)$ has finite fibres, thus $\dim B'=8$.  Note that each pair $(\bar D_2, p_0)$ uniquely determines points $p_1, \dots, p_8$ such that $\bar D_1\cap \bar D_2 = \{ p_0, \dots p_8\}$.

Let    $\bar\gothD_1 =\bar D_1\times B'$ and let $\bar \gothD_2\subset\IP^2\times B'$ be the universal divisor given fibrewise by $\bar D_2$. We blow up $\IP^2\times B'$  in those components of $\bar \gothD_1\cap \bar \gothD_2$ that do not contain the point $p_0$ in any fibre. Passing to a smooth  open subset $B\subset B'$ and denoting the strict transforms of $\bar\gothD_i$ with the same symbol we get a flat family of pairs $\bar\phi\colon (\bar \gothX, \bar \gothD=\bar\gothD_1\cup\bar\gothD_2)\to B$
of smooth del Pezzo surfaces of degree  1 together with two different nodal cubics in the anti-canonical system. 

Possibly after a finite \'etale cover, which we omit from the notation, the normalisations $\bar \gothD_i$ are trivial $\IP^1$-bundles over $B$ and the fibrewise involution described in Construction \ref{Construction} extends to an involution on the family. By \cite[Cor.~5.33]{KollarSMMP}, there is a  family 
$\phi\colon\gothX \to B$
such that every fibre is a surface as constructed in the previous section. Note that the family of singular loci of the fibres $\gothD\to B$ is the trivial family, thus flat. Therefore in the exact sequence
\[
 0\to \pi_*\ko_{\bar\gothX}(-\bar\gothD)\to \ko_{\gothX} \to \ko_{\gothD}\to 0
\]
the first and the third sheaf are flat over $B$, so also $\phi$ is flat.

 We wrap up the discussion so far in the following statement:
\begin{prop}\label{prop: family}
 There are flat families 
\[\begin{tikzcd} \bar\gothX\arrow{dr}[swap]{\bar\phi} \rar  &\gothX\dar{\phi}\\&B\end{tikzcd}\]
over an irreducible  smooth base of dimension $8$ such that
\begin{enumerate}
 \item every fibre of $\phi$ is a Gorenstein stable Godeaux surface as described in Section \ref{sect: surface} and every general such surface occurs in the family,
\item $\bar\gothX\to \gothX$ is the simultaneous normalisation of the fibres of $\phi$, thus a family of del Pezzo surfaces of degree $1$, 
\item the family $\bar\phi\colon \bar\gothX\to B$ is universal in every point, thus $B$ admits a finite map to the moduli space of del Pezzo surfaces of degree 1. 
\end{enumerate}
\end{prop}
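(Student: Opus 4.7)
The proof is largely an assembly of the construction carried out in the preceding paragraphs of this section, so I would proceed by verifying each assertion in turn. Most of the setup has already been dealt with: flatness of $\phi$ is the displayed exact sequence just above the statement, fibres are stable Godeaux surfaces by Proposition \ref{prop: invariants}, and $\bar\gothX\to\gothX$ is fibrewise the normalisation by the pushout construction together with \cite[Cor.~5.33]{KollarSMMP}. Smoothness and $\dim B=8$ are built into the choice of $B\subset B'$.

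To see that every general such surface occurs, I would take any stable Godeaux surface $X_0$ arising from Construction \ref{Construction}, with normalisation $\bar X_0\cong \mathrm{Bl}_{\{p_1,\dots,p_8\}}(\IP^2)$ and double conductor $\bar D_{0,1}\cup\bar D_{0,2}$, and use an element of $PGL_3$ to move $\bar D_{0,1}$ onto the fixed nodal cubic $\bar D_1$. The resulting pair $(\bar D_{0,2},p_0)$, with $p_0$ the selected point among the nine intersection points, gives a point of $B'$, which for generic $X_0$ lies in the smooth open subset $B$.

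The main work lies in item (3), the universality of $\bar\phi$. By Lemma \ref{lem: def dP} the universal deformation of $\bar X_b$ is smooth of dimension $8$, so it is enough to show that the Kodaira--Spencer map
\[
\kappa_b\colon T_bB\to H^1(\kt_{\bar X_b})
\]
is bijective at every $b$; since both sides have dimension $8$, injectivity suffices. A tangent vector at $b$ is an infinitesimal deformation of $(\bar D_2,p_0)$ and induces an infinitesimal motion of the eight points $p_1,\dots,p_8 \in \bar D_1\cap\bar D_2\setminus\{p_0\}$ \emph{along $\bar D_1$}. By the proof of Lemma \ref{lem: def dP}, $\kappa_b$ is obtained by viewing this motion in the quotient $H^0(\kt_{\IP^2}\restr{\{p_i\}})/H^0(\kt_{\IP^2})$; its kernel consists of motions induced by a global vector field on $\IP^2$ that is moreover tangent to $\bar D_1$ at each of the eight points. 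Since $h^0(\kt_{\IP^2})=8$ and the tangency conditions are independent for points in general position on $\bar D_1$, only the zero vector field satisfies them. Combined with $h^0(\kt_{\bar X_b})=0$ from Lemma \ref{lem: def dP}, this makes $\bar\phi$ universal at every $b$, and finiteness of the classifying morphism to the moduli space of del Pezzo surfaces of degree $1$ follows from injectivity of $\kappa_b$ together with the equality of dimensions.

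The main technical obstacle is the genericity claim used in the Kodaira--Spencer calculation, namely that the eight intersection points of two general transverse nodal cubics through a fixed ninth point impose independent tangency conditions on sections of $\kt_{\IP^2}$. I expect this to fall out of the exact sequence already used in Lemma \ref{lem: def dP} once one restricts to the codimension-eight subspace of sections tangent to $\bar D_1$ at the chosen points; the remaining verification is bookkeeping.
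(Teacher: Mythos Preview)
Your treatment of items (1), (2), and ``every general surface occurs'' matches the paper's: these are indeed consequences of the construction in the preceding paragraphs, and your $PGL_3$-normalisation of $\bar D_{0,1}$ onto the fixed nodal cubic is exactly the mechanism used.

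For item (3) you take the dual route to the paper. The paper argues \emph{surjectivity} of the Kodaira--Spencer map and then invokes the equality of dimensions: every small deformation of the eight blown-up points still lies on \emph{some} nodal cubic (through eight general points there is a pencil of cubics, generically with twelve nodal members), and since $PGL_3$ acts transitively on nodal plane cubics one may normalise that cubic to be $\bar D_1$; hence every first-order deformation of $\bar X$ is already realised by the family $\bar\phi$. This is a one-line geometric observation.

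Your approach instead aims for \emph{injectivity} of $\kappa_b$, reducing it to the claim that no nonzero section of $\kt_{\IP^2}$ is tangent to $\bar D_1$ at all eight points $p_1,\dots,p_8$. This reduction is correct, but there are two issues. First, it tacitly uses that $T_bB\to\bigoplus_i T_{p_i}\bar D_1$ is an isomorphism, i.e.\ that the pencil $\langle\bar D_1,\bar D_2\rangle$ meets the discriminant transversally at $\bar D_2$; this is true on a suitable open $B$ but should be said. Second, and more seriously, the independence of the eight tangency conditions is not the ``bookkeeping'' you suggest: the exact sequence in Lemma~\ref{lem: def dP} controls vector fields \emph{vanishing} at the points, not vector fields tangent to a fixed curve there. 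Unwinding your condition, one must show that the restriction $g|_{\bar D_1}$ of the equation of $\bar D_2$ does not lie in the image of $\mathfrak{sl}_3\to H^0(N_{\bar D_1/\IP^2})$, $A\mapsto (v_A f)|_{\bar D_1}$ --- a genuine statement about the Jacobian ideal of $f$ that does not fall out of what you cite.

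In fact the cleanest way to close your gap is precisely the paper's argument: the geometric surjectivity above forces $\kappa_b$ to be an isomorphism, which \emph{a posteriori} proves your tangency claim. So your route is valid but, as written, incomplete; the paper's approach trades the linear-algebra verification for a transparent moduli-theoretic observation.
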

\begin{proof}
 We only need to show the last point. By Lemma \ref{lem: def dP} a deformation of  a smooth fibre $\bar X$ is uniquely determined by a deformation of the 8 points in the plane that we blow up. In our construction we assumed that the points are contained in $\bar D_1$, so any sufficiently small deformation of the points will still by contained in a nodal cubic. Under the action of  $\mathrm{PSL(3, \IC)}$ all nodal cubics are equivalent, so all deformations of $\bar X$ are in fact induced by moving the points on $\bar D_1$, which is what is parametrised by $B$. The universality now follows by smoothness and dimension reasons.
\end{proof}

\section{Infinitesimal deformations}\label{sect: deformations}

To understand the deformations of the surfaces constructed above we start in a slightly more general setting: let $X$ be a surface with at most local normal crossing singularities. We denote by $D$ the double locus in $X$, by $\bar X$ the normalisation and by $\bar D$ the preimage of $D$ in $\bar X$.  Then $\bar D$ is a nodal curve and the glueing induces an involution $\tau$ on $\bar D^\nu$, the normalisation of $\bar D$. The involution preserves preimages of nodes and we have a pushout diagram as in   \eqref{diagram: pushout}.

 We denote as usual\footnote{The surface  $X$ is a local complete intersection, even locally a hypersurface, so we can replace the cotangent complex by the sheaf of K\"ahler differentials.}
\[\kt^i_X = \shext^i(\Omega_X, \ko_X) \text{ and } T^i_X = \Ext^1(\Omega_X, \ko_X),\]
such that $\kt_X=\kt^0_X$ is the tangent sheaf of $X$ and  $T_X^1$ is the space of infinitesimal deformations of $X$. The local-to-global-$\Ext$-sequence (see \cite[Thm. 2.4.1]{SernesiDeformation}) reads as
\begin{equation}\label{eq: ext}
 0\to H^1(\kt_X)\to T^1_X\to H^0(\kt_X^1)\to H^2(\kt_X)\to T_X^2 \to H^1(\kt_X^1) \to 0
\end{equation}
because $X$ is a local complete intersection and thus $\kt^2_X=0$. 

To analyse the spaces in the above sequence we use some of  Friedman's results from  \cite{Friedman83}, which we will now recall.
The natural map $\Omega_X\to \pi_*\Omega_{\bar X}$ has as  image $\bar \Omega_X$, the double dual of $\Omega_X$, and as kernel the sheaf of torsion differentials $\theta_X$. By \cite[Prop.\ 1.5]{Friedman83} we have two short exact sequences
\begin{gather}
 0\to\theta_X\to \Omega_X\to \bar\Omega_X \to 0\label{seq1},\\
 0\to \bar\Omega_X\to \pi_*\Omega_{\bar X}\to \nu_*\bar\Omega_{D^\nu}\to 0.\label{seq2}
\end{gather}
Furthermore \cite[Prop.\ 1.10]{Friedman83} there are line bundles $\ko_D(-X)$ and $\ko_{\bar D}(-X)=\pi^*\ko_D(-X)$ such that
\begin{equation}\label{seq theta} 0\to \ko_D(-X)\to \pi_*\ko_{\bar D}(-X) \to \theta_X\to0\end{equation}
is exact. We need to compute the degree of these line bundles, so we look at local generators: Near a normal crossing point of $X$ with local equation $xy=0$ the sequence corresponds to 
\[ 0 \to \langle xy \rangle \overset d\to \langle xdy, ydx\rangle \to \theta_X\to 0\]
while at  the degenerate cusp with equation $xyz=0$ we have
\[0\to \langle xyz \rangle \overset d\to \langle xydz, xzdy, yzdx\rangle \to \theta_X\to 0\]
On one (local) component of the normalisation, say  $L_x=\{y=z=0\}$, the function $y$, resp.\ the form $dy$ is a generator for the conormal bundle of $L_1$ in $\{z=0\}$; writing $y=xy/x$ we see that a generator of the conormal bundle of $L_x$ is a generator of $\kn^*_{\{xy=0\}/\{z=0\}}\restr{L_x}\tensor \ko_{L_x}(P)$. 

By \cite[Prop.2.3]{Friedman83} the dual of $\ko_D(-X)$ is $\kt^1_X$ and thus by patching together the above local computation we get:
\begin{lem}\label{lem: degree T1}
 Let $\tau$ be the involution acting on $\bar D^\nu$, the normalisation of $\bar D$ and let $R_i\in \bar D^\nu$ be the preimages of nodes of $\bar D$. 
Then $\kt_X^1$ is locally free and $\nu^*\kt_X^1$ is the line bundle on $D^\nu$ corresponding the following  $\tau$-invariant line-bundle on $\bar D^\nu$:
\begin{align*}\pi^*\nu^*\kt^1_X&\isom\bar\nu^*\kn_{\bar D/\bar X}\left(-{\textstyle\sum}R_i\right) \tensor\tau^*\left(\bar\nu^*\kn_{\bar D/\bar X}\left(-{\textstyle\sum}  R_i\right)\right)\tensor \ko_{\bar D^\nu}\left({\textstyle\sum}  R_i\right)\\
&\isom \bar\nu^*\ko_{\bar X}(\bar D)\tensor\tau^*\bar\nu^*\ko_{\bar X}(\bar D)\tensor \ko_{\bar D^\nu}\left(-{\textstyle\sum}  R_i\right).
\end{align*}
\end{lem}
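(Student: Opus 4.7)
The plan is to apply Friedman's Proposition~2.3 to identify $\kt^1_X$ with the dual of the line bundle $\ko_D(-X)$ introduced above. Local freeness of $\kt^1_X$ is then immediate, and the remaining task is to identify this line bundle after pulling back along $\pi\circ\bar\nu\colon\bar D^\nu\to D$. I would carry this out by a local analysis at the two types of points of $D$---the normal crossing locus and the degenerate cusp $P$---and then patch.

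First, at a point of $\bar D^\nu$ lying over the normal crossing locus, the presentation
\[0\to\langle xy\rangle\to\langle x\,dy,\, y\,dx\rangle\to\theta_X\to 0\]
recorded just above identifies $\ko_D(-X)$ as the tensor product of the conormal bundles of the two local branches $\bar D_1\subset\bar X_1$, $\bar D_2\subset\bar X_2$ of $\bar D$ in $\bar X$. Since $\tau$ interchanges these two branches above any smooth point of $D$, pullback to $\bar D^\nu$ yields $\bar\nu^*\kn^*_{\bar D/\bar X}\tensor\tau^*\bar\nu^*\kn^*_{\bar D/\bar X}$; dualising and using $\kn_{\bar D/\bar X}\isom\ko_{\bar X}(\bar D)\restr{\bar D}$ produces the first two factors in the formula for $\pi^*\nu^*\kt^1_X$.

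Next, at a preimage $R_i\in\bar D^\nu$ of a node of $\bar D$---equivalently, at a preimage of the cusp $P$---the corresponding sequence
\[0\to\langle xyz\rangle\to\langle xy\,dz,\, xz\,dy,\, yz\,dx\rangle\to\theta_X\to 0\]
describes $\ko_D(-X)$ near the cusp. The local calculation already carried out in the excerpt, centred on the identity $y=(xy)/x$, amounts to the identification
\[\bar\nu^*\kn^*_{\bar D/\bar X}\restr{L}\isom\kn^*_{L/\bar X_i}(-R_i)\]
on the local branch $L$ of $\bar D^\nu$ through $R_i$, where $\bar X_i$ denotes the sheet of $\bar X$ containing $L$. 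Substituting this into the normal-crossing formula on each of the two tensor factors and tracking how the cusp generator of $\ko_D(-X)$---which carries an extra contribution from the third sheet of $\bar X$ meeting at $P$---interacts with the two resulting twists will show that the net correction reduces to a single $\ko_{\bar D^\nu}(-R_i)$ in $\ko_D(-X)$, and hence to $\ko_{\bar D^\nu}(+R_i)$ in $\kt^1_X$ after dualising; summing over the $R_i$ yields the claimed formula, and the equivalence of the two displayed forms in the lemma is then just the identification $\kn_{\bar D/\bar X}\isom\ko_{\bar X}(\bar D)\restr{\bar D}$ applied to each factor.

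The main obstacle, and the step I expect to spend most care on, is this bookkeeping at the cusp: correctly combining the two conormal twists with the contribution of the cusp generator, and verifying that the resulting line bundle on $\bar D^\nu$ is genuinely $\tau$-invariant so that it descends through $\pi$ to a line bundle on $D^\nu$.
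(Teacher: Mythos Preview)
Your proposal is correct and follows essentially the same approach as the paper: invoke Friedman's identification $\kt^1_X\isom\ko_D(-X)^*$, then compute $\ko_D(-X)$ by the local presentations at normal crossing points and at the degenerate cusp (the $y=xy/x$ trick), and patch. In fact the paper's own argument is just the two displayed local sequences and the one-line conormal identification preceding the lemma, followed by ``patching together the above local computation''; your write-up is already more explicit about the cusp bookkeeping than the paper itself.
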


We now turn back  to our original example. 
\begin{prop}\label{prop: step 1}
In the situation of Construction \ref{Construction} the following hold true:
\begin{enumerate}
\item  The inclusion $\kt_{\bar X}(-\bar D)\into \kt_{\bar X}$ induces for all $i$ isomorphisms in cohomology $ H^i(\kt_{\bar X}(-\bar D))\isom H^i(\kt_{\bar X})$.
 \item $H^2(\kt_X)=0$ and the local-to-global-$\Ext$-sequence becomes 
 \[0\to T^1_{\bar X}\isom H^1(\kt_X)\to T^1_X\to H^0(\kt_X^1)\to 0 \text { and }  T_X^2 \isom  H^1(\kt_X^1) .\]
\item $h^0(\kt_X^1) = h^1(\kt_X^1)\leq 1$ and if both groups vanish then $X$ has unobstructed deformations and all deformations of $X$ are locally trivial deformations induced by a deformation of the del Pezzo surface $\bar X$. 
\end{enumerate}
\end{prop}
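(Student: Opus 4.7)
For part (1), the natural approach is to use the short exact sequence
\[0 \to \kt_{\bar X}(-\bar D) \to \kt_{\bar X} \to \kt_{\bar X}|_{\bar D} \to 0\]
and reduce to showing $H^i(\kt_{\bar X}|_{\bar D})=0$ for all $i$. I would split $\bar D = \bar D_1\cup\bar D_2$ via a Mayer--Vietoris sequence glued at the transverse intersection $Q_3$, reducing to each nodal cubic $\bar D_j$ via its normalization $\nu_j\colon\IP^1\to\bar D_j$. The derivative $d\nu_j\colon\kt_{\IP^1}=\ko(2)\hookrightarrow\nu_j^*\kt_{\bar X}$ is a line subbundle with cokernel of degree $-K_{\bar X}\cdot\bar D_j - 2 = -1$; since $\Ext^1(\ko(-1),\ko(2))=0$, one obtains $\nu_j^*\kt_{\bar X}\isom\ko(2)\oplus\ko(-1)$. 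Feeding this through the normalization sequence relating $\kt_{\bar X}|_{\bar D_j}$ to $\nu_{j*}\nu_j^*\kt_{\bar X}$, and using that the two branches at the node of $\bar D_j$ have transverse tangents in $\bar X$, gives $H^0(\kt_{\bar X}|_{\bar D_j})\isom\IC$ (generated by the infinitesimal $\IC^*$-action on the nodal cubic) and $H^1(\kt_{\bar X}|_{\bar D_j})=0$. Finally, transversality of $\bar D_1$ and $\bar D_2$ at $Q_3$ forces the Mayer--Vietoris gluing map $\IC\oplus\IC\to\kt_{\bar X}|_{Q_3}\isom\IC^2$ to be an isomorphism, yielding $H^*(\kt_{\bar X}|_{\bar D})=0$.

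For part (2), the plan is to construct an exact sequence
\[0 \to \pi_*\kt_{\bar X}(-\bar D) \to \kt_X \to \mathcal Q \to 0\]
on $X$, where $\mathcal Q$ is coherent and supported on the one-dimensional locus $D$; its local structure is extracted via Friedman's sequences \eqref{seq1}--\eqref{seq theta}. Because $\pi$ is finite, $H^i(\pi_*\kt_{\bar X}(-\bar D))=H^i(\kt_{\bar X}(-\bar D))\isom H^i(\kt_{\bar X})$ by (1); since $\mathcal Q$ is supported on a curve, $H^2(\mathcal Q)=0$, which immediately gives $H^2(\kt_X)=0$. The stated form of the local-to-global Ext sequence is then a direct consequence of \eqref{eq: ext}. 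For the refined identification $H^1(\kt_X)\isom T^1_{\bar X}$, I would decompose $\mathcal Q$ according to its local structure---normal crossings along $D\setminus\{P\}$ versus the degenerate cusp at $P$---and check that its cohomology contributions vanish.

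For part (3), Lemma \ref{lem: degree T1} identifies $\nu^*\kt_X^1$ with an explicit $\tau$-invariant line bundle on $\bar D^\nu$; substituting $\bar D\cdot\bar D_j = \bar D_j^2 + \bar D_1\cdot\bar D_2 = 2$ and the three node-preimage pairs yields a line bundle on $D^\nu\isom\IP^1$ of low degree. A direct Riemann--Roch computation, together with the normalization sequence $0\to\kt_X^1\to\nu_*\nu^*\kt_X^1\to\mathcal S_P\to 0$ at the degenerate cusp and careful analysis of the evaluation map at $P$, produces the bound $h^0(\kt_X^1) = h^1(\kt_X^1)\leq 1$. When both groups vanish, part (2) gives $T^2_X\isom H^1(\kt_X^1)=0$ so $X$ is unobstructed, and the Ext sequence collapses to an isomorphism $T^1_{\bar X}\isom T^1_X$. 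Since $H^0(\kt_X^1)=0$ obstructs any infinitesimal smoothing of the singular locus, every deformation of $X$ is locally trivial and induced, via the isomorphism $T^1_{\bar X}\isom T^1_X$, from a deformation of the del Pezzo surface $\bar X$.

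The main obstacle is part (2): constructing and analyzing the comparison sequence for $\kt_X$ is delicate because $X$ has only local, not global, normal crossings and carries the degenerate cusp at $P$, so the local structure of $\mathcal Q$ must be treated separately near $P$ and along $D\setminus\{P\}$. This is exactly the ``annoying fact'' flagged in the introduction.
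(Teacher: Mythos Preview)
Your outline is correct and would yield a valid proof, but it diverges from the paper's argument in instructive ways.

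For part (i), the paper does not pass to the normalisation of each $\bar D_j$. Instead it uses the two short exact sequences
\[
0\to \kt_{\bar D}\to \kt_{\bar X}|_{\bar D}\to \kn'\to 0,\qquad
0\to \kn'\to \ko_{\bar D}(\bar D)\to \kt^1_{\bar D}\to 0,
\]
and interprets each cohomology group deformation-theoretically: $H^0(\kt_{\bar D})=H^1(\kt_{\bar D})=0$ because $\bar D$ is a stable curve, and the map $H^0(\ko_{\bar D}(\bar D))\to H^0(\kt^1_{\bar D})$ is an isomorphism because the three nodes of $\bar D$ can be smoothed independently inside $\bar X$. Your Mayer--Vietoris/normalisation approach is more hands-on and equally valid; the paper's argument trades explicit bundle splittings for a geometric smoothing argument.

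For part (ii), you correctly anticipate a comparison sequence $0\to\pi_*\kt_{\bar X}(-\bar D)\to\kt_X\to\mathcal Q\to 0$, but you leave the identification of $\mathcal Q$ to local case analysis at $P$ versus $D\setminus\{P\}$. The paper avoids this entirely: applying $\shom_{\ko_X}(-,\ko_X)$ to Friedman's sequence \eqref{seq2} and using relative duality for the finite maps $\pi$ and $\nu$ (with $\omega_\pi\isom\ko_{\bar X}(-\bar D)$ and $\omega_\nu\isom\ko_{D^\nu}(-3)$) gives $\mathcal Q\isom\nu_*\ko_{D^\nu}(-1)$ in one stroke. This sheaf has no cohomology, so $H^i(\kt_{\bar X}(-\bar D))\isom H^i(\kt_X)$ for all $i$, which is stronger than what your argument sketches and immediately yields $H^1(\kt_X)\isom T^1_{\bar X}$ without further local work. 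This is precisely how the paper sidesteps the ``annoying fact'' you flag.

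For part (iii), your normalisation-sequence approach would work, but the paper argues directly on $D$: since $\kt^1_X$ is a line bundle of degree $1$ on an irreducible curve of arithmetic genus $2$, it has $\chi=0$ and at most one section (a second section would give a degree-one map to $\IP^1$), so $h^0=h^1\le 1$. This is shorter than going through $D^\nu$ and the skyscraper at $P$.
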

\begin{proof}
For \refenum{i}  we have to show that $\kt_X\restr{\bar D}$,  the cokernel of the inclusion,  has no cohomology.
Dualising the restriction sequence of K\"ahler differentials we get two exact sequences \cite[Prop.~1.1.8]{SernesiDeformation}
\begin{gather*}
 0\to \kt_{\bar D} \to \kt_{\bar X}\restr{\bar D}\to  \kn'\to 0,\\
0\to \kn'\to \ko_{\bar D}(\bar D) \to 
 \shext^1(\ko_{\bar D}, \ko_{\bar D})=\kt^1_{\bar D}\to 0.
\end{gather*}
We interpret the cohomology groups of these sequences in deformation theoretic terms \cite[Sect.~3]{SernesiDeformation}:
\begin{itemize}
 \item $H^0(\kt_{\bar D})$ is the tangent space to the automorphism group of $\bar D$, thus vanishes since $\bar D$ is stable. 
\item $H^1(\kt_{\bar D})$ is the space of infinitesimal locally trivial deformations  of $\bar D$, thus vanishes.
\item $\lambda\colon H^0(\ko_{\bar D}(\bar D)) \to H^0( \kt_{\bar D}^1)$ is the map that associates to an infinitesimal deformations of $\bar D$ inside $\bar X$ the local deformation at the nodes of $\bar D$. Both spaces have dimension $3$, the first by the restriction sequence and the second because $\bar D$ has three nodes. The map $\lambda$ is an isomorphism, because the nodes of $\bar D$ can be smoothed independently in $\bar X$: deforming  $\bar D_i$ to a smooth canonical curve smoothed $Q_i$ while keeping the other two nodes and deforming  $\bar D$ to a general member of $|-2K_{\bar X}|$ smoothed all nodes. 
\item $\ko_{\bar D}(\bar D)$ and $\kt^1_{\bar D}$ have no higher cohomology, thus by the second sequence and  the previous item $H^i(\kn')=0$ for all $i$. 
 \item $H^0(\kt_{\bar D})$ is the tangent space to the automorphism group of $\bar D$, thus vanishes since $\bar D$ is stable. 
\item $H^1(\kt_{\bar D})$ is the space of infinitesimal locally trivial deformations  of $\bar D$, thus vanishes.
\end{itemize}
Thus by the first sequence $H^i(\kt_{\bar X}\restr{\bar D})=0$ for all $i$ as claimed. 

We now turn to \refenum{ii}.
Since $\shom_{\ko_X}(\theta_X, \ko_X)=0$ the sequence \eqref{seq1} induces an isomorphism $\shom_{\ko_X}(\bar \Omega_X, \ko_X)\to \shom_{\ko_X}( \Omega_X, \ko_X)=\kt_X$. 

 Note that the relative dualising sheaf for $\pi $ is $\omega_\pi = \omega_{\bar X}\tensor \pi^*\inverse\omega_X\isom \ko_{\bar X}(-\bar D)$ and similarly the relative dualising sheaf for 
 $\nu\colon D^\nu \to X$ is $\omega_\nu \isom \ko_{D^\nu}(-3)$. 
 Hence by relative duality
\begin{gather*}
 \shext^i_{\ko_X}(\pi_*\Omega_{\bar X}, \ko_X)\isom \pi_*\shext^i_{\ko_{\bar X}}(\Omega_{\bar X}, \ko_{\bar X}(-\bar D))\isom \begin{cases} \pi_*\kt_{\bar X}(-\bar D) & i =0\\ 0 & i>0\end{cases},\\
  \shext^i_{\ko_X}(\nu_*\Omega_{D^\nu}, \ko_X)\isom \nu_*\shext^i_{\ko_X}(\ko_{D^\nu}(-2), \ko_{D^\nu}(-3))\isom  \begin{cases} \nu_*\ko_{D^\nu}(-1)& i =1\\ 0 & i\neq1\end{cases}
\end{gather*}
and thus applying $\shext_{\ko_X}(-, \ko_X)$ to \eqref{seq2} yields a short exact sequence
\[ 0\to \pi_*\kt_{\bar X}(-\bar D)\to \shom_{\ko_X}(\bar \Omega_X, \ko_X)\isom \kt_X\to \nu_*\ko_{D^\nu}(-1)\to 0.\]
In particular, we have induced isomorphisms
\begin{equation}\label{eq: cohomology of tangent sheaf}
  H^i(\kt_{\bar X}(-\bar D))\overset{\isom}{\longrightarrow} H^i(\kt_X).
\end{equation}
and the our claim follows from the first item.

For \refenum{iii} note that by  Lemma \ref{lem: degree T1} the line bundle $\kt_X^1$ on $D$ has degree $1=2\bar D.\bar D_1-3$. Since a line bundle on an irreducible curve of arithmetic genus $2$ can have at most one section, we get the first part of \refenum{ii}. If $H^0(\kt^1_X)=0$ then, by \refenum{i}, we have natural isomorphism  $T^1_{\bar X}\isom H^1(\kt_X)\isom T^1_X$ and $T^2_X=0$, and our claim  follows from \cite[Cor.~2.6.4]{SernesiDeformation}.
\end{proof}
Next we show that only the favourable case in the last item of the previous Proposition occurs.
\begin{lem}\label{lem: step 2}
 In the situation of Construction \ref{Construction} we have $h^0(\kt^1_X)=0$.
\end{lem}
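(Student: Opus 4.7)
The plan is to study $\kt^1_X$ as a line bundle on $D$ via its pullback to the normalisation and then exploit the local structure at the degenerate cusp $P$, ultimately reducing to an explicit linear-algebra statement about evaluations of sections of $\ko_{\IP^1}(1)$.

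First, I would identify $\nu^*\kt^1_X$. Since $\bar D=\bar D_1+\bar D_2\in |-2K_{\bar X}|$ and $\bar D\cdot\bar D_i=\bar D_i^2+\bar D_1\cdot\bar D_2=2$, Lemma \ref{lem: degree T1} gives that $\pi^*\nu^*\kt^1_X$ has degree $2+2-3=1$ on each of the two $\IP^1$-components of $\bar D^\nu$. As $\tau$ swaps the two components isomorphically, $D^\nu\cong\IP^1$ and $\nu^*\kt^1_X\cong \ko_{\IP^1}(1)$, in particular $h^0(\nu^*\kt^1_X)=2$ and $h^1=0$.

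Next, since $D$ has a single ordinary triple point at $P$ (with $\nu^{-1}(P)=\{P_1,P_2,P_3\}$) and is smooth elsewhere, the cokernel of $\ko_D\hookrightarrow \nu_*\ko_{D^\nu}$ is a skyscraper sheaf of length $2$ at $P$. Tensoring with the locally free $\kt^1_X$ and taking cohomology yields
\[
0\to H^0(\kt^1_X)\to \IC^2\xrightarrow{\phi}\IC^2\to H^1(\kt^1_X)\to 0,
\]
where $\phi$ records the failure of a section of $\nu^*\kt^1_X$ to have matching values at $P_1,P_2,P_3$ under the natural identifications $(\nu^*\kt^1_X)_{P_i}\isom (\kt^1_X)_P$. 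It therefore suffices to show that $\phi$ is injective, i.e.\ that no nonzero section $s\in H^0(\IP^1,\ko(1))$ satisfies $s(P_1)=s(P_2)=s(P_3)$ under these identifications.

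Fixing coordinates with $P_i=[p_i:1]$ and a global trivialisation $\nu^*\kt^1_X\isom\ko_{\IP^1}(1)$ introduces scaling factors $c_1,c_2,c_3\in\IC^\times$ comparing the trivialisations, and injectivity of $\phi$ is equivalent to the non-vanishing of
\[c_1c_2(p_1-p_2)+c_2c_3(p_2-p_3)+c_1c_3(p_3-p_1).\]
The key step — and the main obstacle — is to control the triple $(c_i)$ precisely enough to conclude that this determinant does not vanish. I would do so by lifting everything to $\bar D^\nu$, where Lemma \ref{lem: degree T1} writes $\pi^*\nu^*\kt^1_X$ explicitly as $\bar\nu^*\ko_{\bar X}(\bar D)\tensor \tau^*\bar\nu^*\ko_{\bar X}(\bar D)\tensor\ko_{\bar D^\nu}(-\sum R_i)$. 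The crucial asymmetry is that the three $\tau$-orbits of marked points on $\bar D^\nu$ lie over three nodes of \emph{different} origin — the node of $\bar D_1$, the node of $\bar D_2$, and the intersection point $Q_3=\bar D_1\cap\bar D_2$ — so the adjunction trivialisations at the $P_i$ are controlled by the residues of the different branches of $\bar D$ in $\bar X$. Tracking these residues through $\tau$-invariance shows that the triple $(c_1:c_2:c_3)$ is non-trivial enough that the determinant above is non-zero, which forces $H^0(\kt^1_X)=0$.
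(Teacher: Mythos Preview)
Your approach is the natural one --- pull back $\kt^1_X$ to the normalisation $D^\nu\cong\IP^1$, identify it as $\ko_{\IP^1}(1)$, and then try to pin down the glueing data at the triple point $P$ well enough to see that no nonzero section of $\ko_{\IP^1}(1)$ descends. The setup through the display
\[
0\to H^0(\kt^1_X)\to \IC^2\xrightarrow{\phi}\IC^2\to H^1(\kt^1_X)\to 0
\]
is fine, and so is the reduction to the nonvanishing of a determinant in the $c_i$ and $p_i$. The gap is the last paragraph: ``tracking these residues through $\tau$-invariance shows that the triple $(c_1:c_2:c_3)$ is non-trivial enough that the determinant above is non-zero'' is an assertion, not an argument. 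The bad locus $c_1(p_2-p_3)+c_2(p_3-p_1)+c_3(p_1-p_2)=0$ is a genuine line in the space of glueing data (containing both $(1:1:1)$ and $(p_1:p_2:p_3)$), and nothing you have written excludes it. You would need to actually compute the $c_i$ from the local description of $\kt^1_X$ at the degenerate cusp, and this is delicate: the three branches of $D$ at $P$ come from three genuinely different sources (the node of $\bar D_1$, the node of $\bar D_2$, and the transverse intersection $Q_3$), and the identifications involve both the adjunction isomorphisms and the action of $\tau$. The paper itself remarks, immediately after its proof, that the author attempted exactly this direct computation and could not nail down the glueing data; so you are waving your hands at precisely the point where the difficulty lies.

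The paper's proof avoids this computation entirely by an indirect geometric argument. Assume $h^0(\kt^1_X)\neq 0$; then a nonzero section $\xi$ vanishes at a single smooth point of $D$ and, by the surjectivity in the local-to-global Ext sequence (Proposition \ref{prop: step 1}), lifts to a first-order deformation $\gothX\to\spec\IC[\epsilon]/(\epsilon^2)$ which is generically smoothing along $D$. One then blows up the degenerate cusp $P$ in the total space $\gothX$. The central fibre $Y$ of the blow-up is again locally normal crossing; its normalisation has two components, $\mathrm{Bl}_{Q_1,Q_2,Q_3}\bar X$ and a new $\IP^2$, glued along the strict transform $D_0\cong D^\nu$ and three further curves $A_1,A_2,A_3$. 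Applying Lemma \ref{lem: degree T1} to $Y$ one finds $\deg\kt^1_Y|_{D_0}=-5$, so every section of $\kt^1_Y$ vanishes identically on $D_0$. But the induced first-order deformation of $Y$ is still generically smoothing along $D_0$, a contradiction. The point is that after the blow-up the relevant degree becomes negative, so no delicate glueing computation is needed.
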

\begin{proof}
Assume there is a non-zero section $\xi\in H^0(\kt^1_X)$. Since $\kt^1_X$ has degree 1 on $D$, the section $\xi$ vanishes exactly in one smooth point $R\in D$. 
By Proposition \ref{prop: step 1}\refenum{i} this section lifts to an infinitesimal deformation $\gothX\to B=\spec\left(\IC[\epsilon]/(\epsilon^2)\right)$ of $X$. Locally analytically along the singular locus of $X$, the family looks like the following models in $\IA^3_{B}$: 
\begin{align*}
&\text{at the degenerate cusp $P$:} && xyz+\epsilon\\
& \text{at a general nc point:} && xy+\epsilon\\
&\text{at the zero $R$ of $\xi$:} && xy+z\epsilon
\end{align*}
We will now derive a contradiction as follows: guided by the local models we  construct a partial-log-resolution of the central fibre $\sigma\colon\gothY \to \gothX$ with central fibre $Y$. Using the description of $\kt^1_{Y}$ from Lemma \ref{lem: degree T1} we then show that $Y$ cannot have a deformation which is non-singular in codimension 1. 

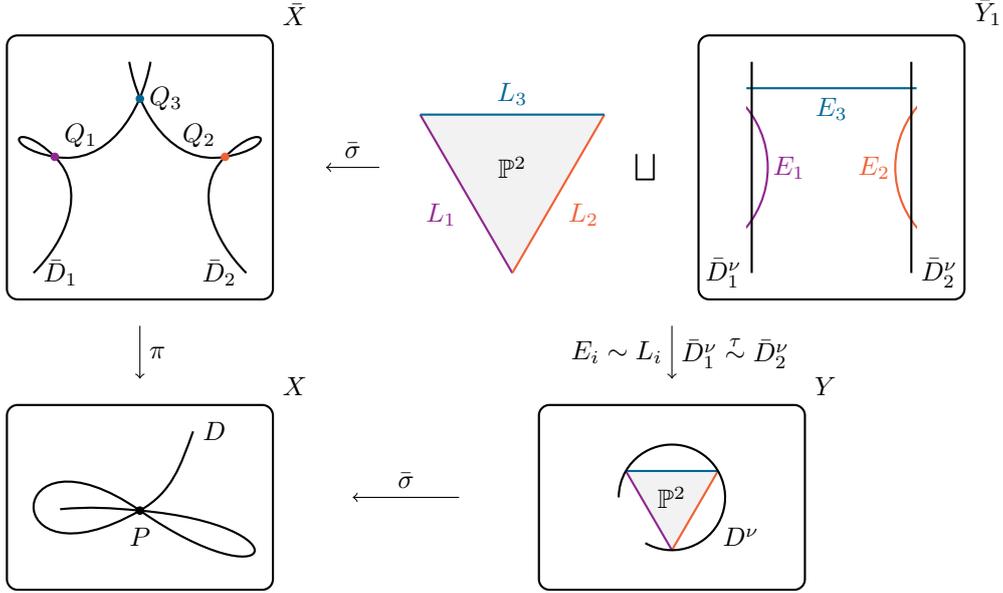
\begin{figure}
\small
\begin{tikzpicture}
[curve/.style ={thick, every loop/.style={looseness=10, min distance=30}},
q1/.style={color=Plum},
q2/.style={color=RedOrange},
q3/.style={color=MidnightBlue},
scale = 0.7
]

\begin{scope}
\node (Q3) at (0,2.3){};
\node (Q2) at (1.6,1.2) {};
\node (Q1) at (-1.6,1.2) {};

\draw[thick, rounded corners] (-2.5, -1.5) rectangle (2.5, 3.5) node [above right] {$\bar X$};
\draw [curve] (-2,-1) node [ right] {$\bar D_1$} to[out=45, in=315] (-1.6, 1.2) to[out=135, in =170,loop] () to[out=350, in = 260] (.2,3);
\draw [curve, xscale=-1] (-2,-1)  node [left] {$\bar D_2$} to[out=45, in=315] (-1.6, 1.2) to[out=135, in =170,loop] () to[out=350, in = 260] (.2,3);

\filldraw[q3] (Q3) circle (2pt) ; \node at (Q3) [right] {$Q_3$};
\filldraw[q2] (Q2) circle (2pt);  \node at (Q2) [above left] {$Q_2$};
\filldraw[q1] (Q1) circle (2pt); \node at (Q1) [above right] {$Q_1$};

\draw[->] (0,-2) to node[right] {$\pi$}++(0,-1);
\end{scope}

\begin{scope}[yshift=-5.5cm]
\draw[thick, rounded corners] (-2.5, -1.5) rectangle (2.5, 2) node [above right] {$X$};
\draw[curve,  every loop/.style={looseness=30, min distance=90}] (1,1.5) node [right]{$D$} to[out = 250, in = 30]
(0,0) to[out = 210, in = 145, loop]
() to[out = 325, in =355 ,  loop]
() to[out =175, in = 5] (179:1.5cm);

\filldraw (0,0) circle (2pt) node [below, yshift = -.1cm] {$P$};
\end{scope}

\begin{scope}[xshift = 7cm]
\begin{scope}[xshift = 6cm]
\draw[thick, rounded corners] (-2.5, -1.5) rectangle (2.5, 3.5) node [above right] {$\bar Y_1$};
\begin{scope}[curve, q1, yshift =.5cm]
\clip (-1.6,-2) rectangle (0, 2);
\node [draw] at (-3,.5) [circle through={(-1.5,-.5)}] {};
\node at (-.8, .5) {$E_1$};
\end{scope}

\begin{scope}[curve, q2,  yshift =.5cm]
\clip (1.6,-2) rectangle (0, 2);
\node [draw] at (3,.5) [circle through={(1.5,-.5)}] {};
\node at (.8, .5) {$E_2$};
\end{scope}

\draw [curve, q3] (-1.6,2.5) to node [ below] {$E_3$} ++(3.2,0);
\draw [curve] (-1.5,-1) node [ left] {$\bar D_1^\nu$}-- ++(0,4);
\draw [curve] (1.5,-1) node [ right] {$\bar D_2^\nu$}-- ++(0,4);
\end{scope}

\begin{scope}[xshift=0cm, yshift = 1cm]
\coordinate (a) at  (-90:2);
\coordinate (b) at  (30:2);
\coordinate (c) at  (150:2);

\fill[color=black!5] (a)--(b)--(c)-- cycle;
\draw[curve, q1] (a) -- node[below left] {$L_1$}(c);
\draw[curve, q2] (a) -- node[below right] {$L_2$}(b);
\draw[curve, q3] (b) --node[above] {$L_3$} (c);

\node at (0,0) {$\IP^2$};
\end{scope}

\node at (2.5,1){$\bigsqcup$};
\draw[->] (3,-2) to node[left] {$E_i\sim L_i$}node [right]{$\bar D^\nu_1\overset{\tau}\sim \bar D^\nu_2$} ++(0,-1);
\draw[->] (-2.5,1) to node[above] {$\bar\sigma$} ++(-1,0);

\end{scope}

\begin{scope}[xshift = 10cm, yshift = -5.25cm]
\draw[thick, rounded corners] (-2.5, -1.75) rectangle (2.5, 1.75) node [above right] {$Y$};

\coordinate (a) at  (-90:1);
\coordinate (b) at  (30:1);
\coordinate (c) at  (150:1);

\fill[color=black!5] (a)--(b)--(c)-- cycle;
\draw[curve, q1] (a) -- (c);
\draw[curve, q2] (a) -- (b);
\draw[ curve, q3] (b) --(c);
\node at (0,0) {$\IP^2$};

\draw[curve] (-120:1) arc (-120:180:1);
\node at (-30:1.5) {$D^\nu$};

\draw[->] (-4,0) to node[above] {$\bar\sigma$} ++(-2,0);

\end{scope}

\end{tikzpicture}
\caption{The partial log-resolution of the central fibre}\label{fig: log-resolution}
\end{figure}

So we blow up $P\in X\in \gothX$. From the local model we see that the normalisation of the central fibre, which is again a normal crossing divisor, consists of two components:
\[ \bar Y_1 = \mathrm{Bl}_{Q_1, Q_2, Q_3}\bar X \text{ and } \bar Y_2 = \IP^2\]
These are glued together as in Figure \ref{fig: log-resolution} and the singular locus has four irreducible components: the  curve $D^\nu$, which is the strict transform of the singular locus in $X$ and three curves $A_1,  A_2, A_3$ where we glue an exceptional curve of the blow up $\bar Y_1\to\bar X$ to a line in $\IP^2$. By Lemma \ref{lem: degree T1} we compute
\begin{gather*}
 \deg \kt_{Y}^1\restr{D_0} = -1-1-3 = -5\\
\deg \kt_{Y}^1\restr{A_i} = 1+3-2=2 \qquad(i = 1,2,3)
\end{gather*}
 Thus every section of $\kt_{Y}^1$ vanishes along $D_0$ which is a contradiction to the existence of the family $\gothY$ constructed.
\end{proof}
\begin{rem}
 I  would have liked to compute $H^0(\kt^1_X)$ directly in terms of the construction, instead of constructing the partial log-resolution. However, the precise way to nail down the glueing data on  $\nu^*\kt^1_X\isom \ko_{D^\nu}(1)$ which makes it a sheaf on $D$ evaded me. 

It seems plausible that some of the above can be formulated favourably in the language of log-structures but it is unclear to me if this gives a neater proof.
\end{rem}

\section{Proof of Theorem A}\label{sect: proof}
In Section \ref{sect: family} we constructed a family $\phi\colon \gothX \to B$ of Gorenstein stable surfaces over a smooth base which parametrises the general surface as constructed in Section \ref{sect: surface}. In addition, the family admits a simultaneous normalisation which is a family of del Pezzo surfaces of degree 1, universal in every point (see Proposition \ref{prop: family}).

We have proved Theorem $A$ as soon as we show that this family is locally at a point $b \in B$ the universal deformation space of the fibre $X=\gothX_b$. This is accomplished by Proposition \ref{prop: step 1}\refenum{iii} combined with Lemma \ref{lem: step 2}.

\section{The canonical ring in a particular example}\label{sect: 3}
One original motivation to study this example was the search for Gorenstein stable surfaces  with a complicated canonical ring and possibly the 5-canonical map not an embedding. We thought that this example was suitable, because the invariants are small and the double curve prevents the 4-canonical map from being an embedding.

More precisely, we proved in \cite[Example~47]{liu-rollenske14} that since $K_X$ has degree 1 one $D$  the 2-canonical map has base points and neither the 3-canonical nor the 4-canonical map are embeddings when restricted to $D$.

We will disprove these expectations in one particular example, that we will now describe. Our computation, which we will only sketch, is based on the following result of Koll\'ar, which we state  in a simplified version.

\begin{prop}[{\cite[Prop.~5.8]{KollarSMMP}}]\label{prop: sections}
Let $X$ be a Gorenstein stable  surface. Define the different  $\Delta = \Diff_{\bar D^\nu}(0)$ by the equality $(K_{\bar X}+\bar D)\restr{ \bar D}  = K_{\bar D}+ \Delta$. 

Then a section $s\in H^0(\bar X, m(K_{\bar X}+\bar D))$ descends to a section in $H^0(X, mK_X)$ if and only if the image of $s$ in $H^0(\bar D^\nu, m( K_{\bar D}+ \Delta))$ under the Residue map is $\tau$-invariant if $m$ is even respectively $\tau$-anti-invariant if $m$ is odd.
 \end{prop}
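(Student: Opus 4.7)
The approach is to use the normalisation exact sequence for $\omega_X^{[m]}$ combined with the Poincar\'e residue. Since $X$ is Gorenstein and $\bar D$ is the conductor of the normalisation $\pi\colon \bar X\to X$, adjunction gives $\pi^*\omega_X\isom \omega_{\bar X}(\bar D)$ and hence a natural inclusion $\omega_X^{[m]}\hookrightarrow \pi_*\omega_{\bar X}(\bar D)^{\otimes m}$. The plan is to describe the cokernel as a sheaf supported on $D$ whose pullback to $\bar D^\nu$ records exactly the deviation from the $(-1)^m$-equivariance under $\tau$, and then to take global sections.

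The heart of the argument is a local computation at a smooth point of the double locus with analytic equation $xy = 0$: the generator $dx\wedge dy/xy$ of $\omega_{\bar X}(\bar D)$ restricts via Poincar\'e residue to $dy/y$ on $\{x=0\}$ and to $-dx/x$ on $\{y=0\}$, so the two branch-residues differ by a sign. Taking $m$-th powers and pulling back to $\bar D^\nu$, the residues on preimages identified by $\tau$ differ by the factor $(-1)^m$. Using the different formula $(K_{\bar X}+\bar D)\restr{\bar D}=K_{\bar D}+\Delta$ the residue lands exactly in $m(K_{\bar D}+\Delta)$, so locally a section of $\omega_{\bar X}(\bar D)^{\otimes m}$ descends to $X$ iff its residue is $\tau$-invariant for $m$ even and $\tau$-anti-invariant for $m$ odd. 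Assembling the local pieces produces a short exact sequence
\[
0\to \omega_X^{[m]}\to \pi_*\omega_{\bar X}(\bar D)^{\otimes m} \xrightarrow{\mathrm{Res}-(-1)^m\tau^*\mathrm{Res}} \nu_*\ko_{D^\nu}\bigl(m(K_{\bar D}+\Delta)\bigr)\to 0,
\]
and the statement follows by taking $H^0$.

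The main obstacle is the degenerate cusp $P$, where three branches of $D$ meet and the two-branch Poincar\'e residue has to be replaced by a genuinely three-branch version. One can either verify directly on the local model $\{xyz=0\}$ that $\omega_X$ is generated by $dx\wedge dy\wedge dz/(xyz)$ and trace the residues pairwise through each of the three local components to see that the same $(-1)^m$-twisted gluing condition on $\bar D^\nu$ survives, or invoke Koll\'ar's systematic treatment of the different for slc singularities in \cite{KollarSMMP}, which handles smooth double locus and worse degenerations uniformly. In our concrete Godeaux setting the preimages of $P$ in $\bar D^\nu$ are exactly the points $R_i$ appearing in Lemma~\ref{lem: degree T1}, so the twist by $\ko_{\bar D^\nu}(\sum R_i)$ built into $K_{\bar D}+\Delta$ is precisely what absorbs this correction.
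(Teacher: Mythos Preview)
The paper does not give a proof of this proposition: it is quoted directly from Koll\'ar's book with the citation \cite[Prop.~5.8]{KollarSMMP} and then used as a black box in Section~\ref{sect: 3} to identify the canonical ring of $X$ as a subring of the anti-canonical ring of $\bar X$. There is therefore no argument in the paper to compare your attempt against.

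That said, your sketch follows the standard route (and is in the spirit of Koll\'ar's treatment): the Poincar\'e residue at a smooth normal-crossing point of $D$ gives the sign $(-1)^m$ under the branch swap, the different absorbs the correction at the degenerate cusp, and assembling this into an exact sequence of sheaves on $X$ yields the statement on global sections. One small point of care: the image of your map $\mathrm{Res}-(-1)^m\tau^*\mathrm{Res}$ lands in the $(-1)^{m+1}$-eigenspace of $\tau$ acting on sections over $\bar D^\nu$, not in sections literally pulled back from $D^\nu$; since $\tau$ acts freely on $\bar D^\nu$ in this construction, that eigenspace is indeed (the pushforward of) a line bundle on $D^\nu$, so after making this identification your short exact sequence is correct. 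You should also note that surjectivity on the right is immediate at smooth normal-crossing points (the residues on the two local sheets of $\bar X$ are independent), while at the degenerate cusp it is part of the local check you defer to \cite{KollarSMMP}.
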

  
Let $S=\IC[x_1, x_2, y,z]$ be a polynomial ring with weights $(1,1,2,3)$ and let $f =   z^2+y^3-(x_1-x_2)yz+x_1^5x_2+x_1x_2^5$. Then
\[\bar X=\Proj(S/f)\subset\IP(1,1,2,3)\]
is a smooth del Pezzo surface of degree 1, $S/f$ is the anti-canonical ring of $\bar X$  and the equations $x_i=0$ define nodal rational curves $\bar D_i\in |-K_{\bar X}|$. We now construct a Gorenstein stable Godeaux surface as in Construction \ref{Construction}.

Since $K_{\bar X}+\bar D = -K_{\bar X}$ we can, by Proposition \ref{prop: sections},  consider the canonical ring of $X$ as a subring of the anti-canonical ring of $\bar X$.

In the notation of Figure \ref{fig: construction} we have 
\[K_{\bar D}+\Delta=(K_{\bar X}+\bar D)\restr{\bar D}  = K_{\bar D^\nu}+\sum_{ij} Q_{ij}\]
which has degree one on each component. We choose generators for  $R_{\bar D^\nu}$, the ring of sections of $K_{\bar D}+\Delta$, such that the sum of the  residue map in the single degrees is described algebraically by 
\[
\begin{split}
\mathrm{Res}\colon S/f \to&\,  R_{\bar D^\nu}\isom \IC[a_1, b_1]\times \IC[a_2,b_2]
\\
(x_1, x_2, y,z)\mapsto &\, (b_2-a_2, b_1-a_1,  -a_1b_1-a_2b_2,  -a_1^2b_1+a_2^2b_2) 
\end{split}
\]
Let $R^\tau$ be the subring of $R_{\bar D^\nu}$ consisting of $\tau$-anti-invariant sections in odd degrees and $\tau$-invariant sections in even degrees, that is, the pullback to $\bar D^\nu$ of the ring of sections of $K_X\restr D^\nu$. 
By Proposition \ref{prop: sections} the canonical ring of $X$ is  the preimage of $R^\tau$ in $S/f$. For a particular choice of $\tau$, we have 
\[ R^\tau = \IC[a_1+a_2, b_1+(a_2-b_2)]\subset R_{\bar D^\nu}.\]

Using this explicit model we computed the canonical ring of $X$ with  Singular \cite{singular}. We summarise the result in the following proposition.

\begin{prop}
 Let $X$ be the surface constructed from $\bar X = \Proj S/f$ as described in Construction \ref{Construction}. Then the following holds:
 \begin{enumerate}
  \item The canonical ring $R(X)$ is generated as a subring of $S/f$ by 
  \begin{gather*}
x_1^2+x_2^2-y,
x_1 x_2,\\
x_1 y+x_2 y,
x_1^3-x_2^3+3 x_2 y+z,
x_1 x_2^2,
x_1^2 x_2,\\
x_1 z-x_2 z,
x_1^2 y-x_2^2 y-x_2 z,
x_1 x_2 y,
x_1 x_2^3,\\
x_2 y^2-x_1^2 z,
x_1 y^2+x_2^2 z,
x_1 x_2 z, 
  \end{gather*}
  and there are 54 relation of in degrees $(6^6, 7^{12},  8^{18},9^{12}, 10^6)$.
  Consequently, there is an embedding into weighted projective space \[X\isom \Proj R(X)\into \IP(2^2,3^4, 4^4,5^3)\]
 as a subvariety of codimension ten.
  \item The bicanonical map $X\dashrightarrow \IP^1$ has a unique base point at $P$.
  \item The tricanonical map is a birational morphism onto the surface in $\IP^3$ of degree 9 with equation
\begin{align*}
 z_2^9&-5 z_0 z_2^7 z_3+2 z_1 z_2^7 z_3+4 z_2^8 z_3+6 z_0^2 z_2^5 z_3^2-5 z_0 z_1 z_2^5 z_3^2+z_1^2 z_2^5 z_3^2-11 z_0 z_2^6 z_3^2\\
& +6 z_1 z_2^6 z_3^2+6 z_2^7 z_3^2+z_0^3 z_2^3 z_3^3+9 z_0^2 z_2^4 z_3^3-11 z_0 z_1 z_2^4 z_3^3+3 z_1^2 z_2^4 z_3^3-7 z_0 z_2^5 z_3^3\\
& +6 z_1 z_2^5 z_3^3 +2 z_2^6 z_3^3+3 z_0^2 z_2^3 z_3^4-7 z_0 z_1 z_2^3 z_3^4+3 z_1^2 z_2^3 z_3^4+4 z_0 z_2^4 z_3^4-5 z_2^5 z_3^4\\
 &-z_0 z_1 z_2^2 z_3^5
+z_1^2 z_2^2 z_3^5+11 z_0 z_2^3 z_3^5-6 z_1 z_2^3 z_3^5-5 z_2^4 z_3^5+7 z_0 z_2^2 z_3^6-6 z_1 z_2^2 z_3^6\\
 &+2 z_2^3 z_3^6
 +z_0 z_2 z_3^7 -2 z_1 z_2 z_3^7+6 z_2^2 z_3^7+4 z_2 z_3^8+z_3^9 
\end{align*}
The singular locus consists of the two lines $z_2=z_3=0$ and $z_0=z_2+z_3=0$; the first line is the image of $D$. A real picture of the surface with the singular locus clearly visible is shown in Figure \ref{fig: X3}.
\begin{figure}\caption{The image of the 3-canonical map}\label{fig: X3}
 \includegraphics[width=4cm]{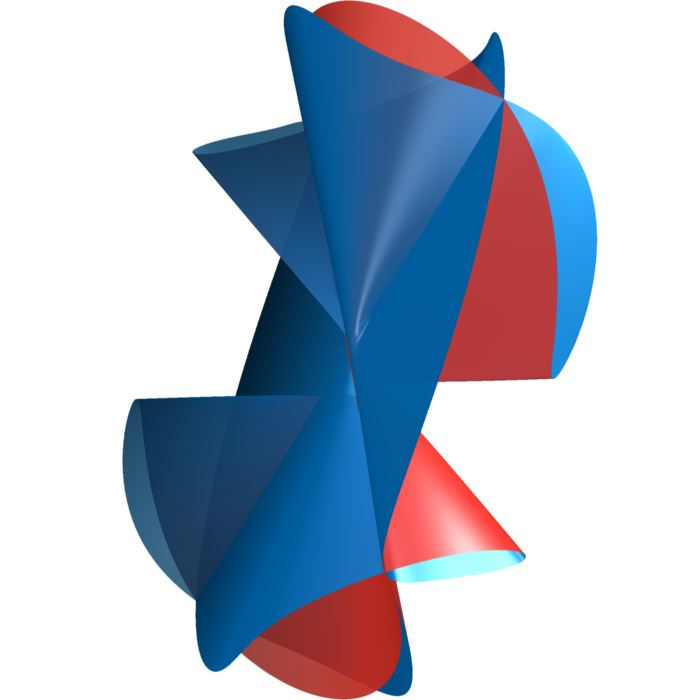}
\end{figure}

\item The 4-canonical map is a birational morphism onto a surface of degree 16 in $\IP^6$.
\item The 5-canonical map is an embedding. 
 \end{enumerate}
\end{prop}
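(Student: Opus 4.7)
The plan is to use Proposition \ref{prop: sections} as an effective membership criterion: a homogeneous element of $(S/f)_m$ descends to a section of $mK_X$ precisely when its image under the explicit residue map $\mathrm{Res}$ lies in $R^\tau_m$. In each degree this reduces to a finite linear-algebra problem over a monomial basis of $(S/f)_m$ modulo $f$, with the expected dimensions $h^0(mK_X)=1+\tfrac{m(m-1)}{2}$ (from Riemann–Roch and stable vanishing) serving as a consistency check. Implementing this in Singular and computing degree by degree, a minimal generating set is extracted by selecting, in each degree, a complement of the subspace coming from products of lower-degree generators. This produces the 13 generators of degrees $2,3,4,5$ in item (i). Presenting $R(X)$ as a quotient of $\IC[z_0,\dots,z_{12}]$ with the corresponding weights and computing the ideal of relations via a weighted Gröbner basis yields the 54 relations and hence the embedding into $\IP(2^2,3^4,4^4,5^3)$.

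For (ii)--(iv) the canonical-ring computation makes each pluricanonical map explicit. In (ii), pulling the two degree-$2$ generators $x_1 x_2$ and $x_1^2+x_2^2-y$ back to $\bar X$, one computes their common zero locus on $V(f)\subset\IP(1,1,2,3)$ by direct substitution and finds it to be exactly $\{Q_1,Q_2,Q_3\}=\pi^{-1}(P)$, so $P$ is the unique base point downstairs. For (iii) and (iv) the images in $\IP^3$ and $\IP^6$ respectively are produced by eliminating the other variables from the presentation ideal of $R(X)$; the explicit equation in (iii) is then read off, birationality is certified by producing a rational inverse built from polynomial combinations of the generators (equivalently by a generic Jacobian rank computation), and the singular loci are obtained from the Jacobian ideal of the resulting equation.

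The main obstacle is (v). Writing $\varphi=\varphi_{|5K_X|}\colon X\to\IP^{10}$, one must verify both injectivity and injectivity of the differential everywhere, including at the degenerate cusp $P$ and along the normal-crossings locus $D\setminus\{P\}$ where smoothness fails in the classical sense. Since $R(X)$ is generated in degrees $\leq 5$, the presentation of (i) gives a closed embedding $X \hookrightarrow \IP(2^2,3^4,4^4,5^3)$, and $\varphi$ factors through this embedding via the rational map defined by the degree-$5$ piece, so the question reduces to whether this projection restricted to $X$ is an isomorphism onto its image. At smooth points this is a rank computation on the degree-$5$ subspace of $R(X)$ that Singular can carry out directly on the list of generators. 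The delicate step is the local analysis near $D$ and $P$: using the local models $\{xy=0\}$ and $\{xyz=0\}$ and pulling back to the normalisation, one checks that the degree-$5$ sections separate the local branches and their tangent directions, exploiting the $\tau$-equivariance of the residue map from Proposition \ref{prop: sections} to ensure that genuinely distinct pairs of points of $\bar D^\nu$ identified by $\tau$ have distinct images. Combining the smooth-point rank check with this local verification yields the embedding.
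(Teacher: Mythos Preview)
Your proposal is correct and follows the paper's approach: the paper gives no proof beyond stating that the canonical ring was computed in Singular from the explicit residue model set up just before the proposition, and your outline is a faithful expansion of exactly that computation. The only mild difference is in item~(v): rather than your local branch-separation analysis at $D$ and $P$, the paper simply records the Singular output (and notes elsewhere that very ampleness of $|5K_X|$ can be shown directly for the whole family via \cite{franciosi-rollenske14}), so your argument is more detailed than what the paper actually supplies.
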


\begin{rem}
 \begin{enumerate}
  \item A very successful method to study canonical rings is restriction to a canonical curve. Here we don't have a canonical curve but it is tempting to look at the restriction to the non-normal locus $D$. However, this does not help, because $D$ is not Cartier and in fact there is a generator in degree 5 that vanishes on $D$.
\item The properties of the pluricanonical maps in low degrees can easily be worked out directly without the help of a computer algebra system.
\item Appearantly, the number of relations in the ring suggests that it might be obtained by repeated unprojections but at this level of complexity, Gorenstein in codimension 10,  I have not even tried to guess a general format for the canonical rings of the surfaces in the family.
 \end{enumerate}
\end{rem}

%        
%  \bibliographystyle{../halpha}
%  \bibliography{../srollens}

\end{document}